\providecommand{\U}[1]{\protect\rule{.1in}{.1in}}
\newtheorem{theorem}{Theorem}
\newtheorem{condition}[theorem]{Condition}
\newtheorem{criterion}[theorem]{Criterion}
\newtheorem{definition}[theorem]{Definition}
\newtheorem{example}[theorem]{Example}
\newtheorem{proposition}[theorem]{Proposition}
\newtheorem{remark}[theorem]{Remark}
\newenvironment{proof}[1][Proof]{\noindent\textbf{#1.} }{\ \rule{0.5em}{0.5em}}
\begin{document}

\title{Approximate and Approximate Null-Controllability of a Class of Piecewise
Linear Markov Switch Systems\thanks{\textbf{AMS\ MSC:} 93B05, 93B25, 60J75,
\textbf{Keywords:} Approximate (null-)controllability \textperiodcentered
\ Controlled Markov switch process \textperiodcentered\ Invariance
\textperiodcentered\ Stochastic gene networks}%
\thanks{\textbf{Acknowledgement.} The work of the first author has been
partially supported by he French National Research Agency project PIECE,
number \textbf{ANR-12-JS01-0006.} The work of the third author has been
partially supported by the Grant PN-II-ID-PCE-2011-3-0843, no. 241/05.10.2011,
Deterministic and stochastic systems with state constraints.}}
\author{Dan Goreac\thanks{Universit\'{e} Paris-Est, LAMA (UMR 8050), UPEMLV, UPEC,
CNRS, F-77454, Marne-la-Vall\'{e}e, France, Dan.Goreac@u-pem.fr}
\and Alexandra Claudia Grosu\thanks{Faculty of Mathematics, \textquotedblleft
Alexandru Ioan Cuza\textquotedblright\ University, Bd. Carol I, no. 9-11,
Iasi, Romania}
\and Eduard-Paul Rotenstein\thanks{Faculty of Mathematics, \textquotedblleft
Alexandru Ioan Cuza\textquotedblright\ University, Bd. Carol I, no. 9-11,
Iasi, Romania}}
\maketitle
\maketitle

\begin{abstract}
We propose an explicit, easily-computable algebraic criterion for approximate
null-controllability of a class of general piecewise linear switch systems
with multiplicative noise. This gives an answer to the general problem left
open in \cite{GoreacMartinez2015}. The proof relies on recent results in
\cite{CFJ_2014} allowing to reduce the dual stochastic backward system to a
family of ordinary differential equations. Second, we prove by examples that
the notion of approximate controllability is strictly stronger than
approximate null-controllability. A sufficient criterion for this stronger
notion is also provided. The results are illustrated on a model derived from
repressed bacterium operon (given in \cite{Krishna29032005} and reduced in
\cite{crudu_debussche_radulescu_09}).

\end{abstract}

\section{Introduction}

This short paper aims at giving an answer to an approximate
(null-)controllability problem left open in \cite{GoreacMartinez2015}. We deal
with Markovian systems of switch type consisting of a couple mode/ trajectory
denoted by $\left(  \Gamma,X\right)  .$ The mode component $\Gamma$ evolves as
a pure jump Markov process and cannot be controlled. It corresponds to spikes
inducing regime switching. The second component $X$ obeys a controlled linear
stochastic differential equation (SDE) with respect to the compensated random
measure associated to $\Gamma$. The linear coefficients governing the dynamics
depend on the current mode.

The controllability problem deals with criteria allowing one to drive the
$X_{T}$ component arbitrarily close to acceptable targets. An extensive
literature on controllability is available in different frameworks:
finite-dimensional deterministic setting (Kalman's condition, Hautus test
\cite{Hautus}), infinite dimensional settings (via invariance criteria in
\cite{Schmidt_Stern_80}, \cite{Curtain_86}, \cite{russell_Weiss_1994},
\cite{Jacob_Zwart_2001}, \cite{Jacob_Partington_2006}, etc.), Brownian-driven
control systems (exact terminal-controllability in \cite{Peng_94}, approximate
controllability in \cite{Buckdahn_Quincampoix_Tessitore_2006}, \cite{G17},
mean-field Brownian-driven systems in \cite{G1}, infinite-dimensional setting
in \cite{Fernandez_Cara_Garrido_atienza_99}, \cite{Sarbu_Tessitore_2001},
\cite{Barbu_Rascanu_Tessitore_2003}, \cite{G16}, etc.), jump systems
(\cite{G10}, \cite{GoreacMartinez2015}, etc.). We refer to
\cite{GoreacMartinez2015} for more details on the literature as well as
applications one can address using switch models.

The paper \cite{GoreacMartinez2015} provides some necessary and some
sufficient conditions under which approximate controllability towards null
target can be achieved. In all generality, the conditions are either too
strong (sufficient) or too weak (only necessary). Equivalence is obtained in
\cite{GoreacMartinez2015} for particular cases : (i) Poisson-driven systems
with mode-independent coefficients and (ii) continuous switching. In the
present paper, we extend the work of \cite{GoreacMartinez2015} and give
explicit equivalence criterion for the general switching case. The approach
relies, in a first step, as it has already been the case in \cite[Theorem
1]{GoreacMartinez2015}, on duality techniques (briefly presented in Subsection
\ref{SubsectionDuality}). However, the intuition on this new criterion and its
proof are extensively based on the recent ideas in \cite{CFJ_2014}. The dual
backward stochastic system associated to controllability is interpreted as a
system of (backward) ordinary differential equations in Proposition
\ref{EquivalenceBSDEODE}. Reasoning on this new system provides the necessary
and sufficient criterion for approximate null-controllability for general
switching systems with mode-dependent multiplicative noise (Theorem
\ref{ThMain} whose proof relies on Propositions \ref{PropNec0Ctrl} and
\ref{PropSuff0Ctrl}). As a by-product, we considerably simplify the proofs of
\cite[Criteria 3 and 4]{GoreacMartinez2015} (in Subsection
\ref{SubsectionComparison}). Second, we give some elements on the stronger
notion of (general) approximate controllability. While the notions of
approximate and approximate null-controllability are known to coincide for
Poisson-driven systems with mode-independent coefficients, we give an example
(Example \ref{ExempleAppCtrl}) showing that this is no longer the case for
general switching systems. Furthermore, we show that the condition exhibited
in \cite[Proposition 3]{GoreacMartinez2015} in connection to approximate
null-controllability is actually sufficient for general approximate
controllability (see Condition \ref{SuffConditionAppCtrl}). The proof follows,
once again, from the deterministic reduction inspired by \cite{CFJ_2014}. The
theoretical results are illustrated on a model derived from repressed
bacterium operon (given in \cite{Krishna29032005} and reduced in
\cite{crudu_debussche_radulescu_09}).

We begin with presenting the problem, the standing assumptions and the main
results: the duality abstract characterization in Theorem \ref{dualityTh}, the
explicit criterion in Theorem \ref{ThMain}. We give a considerably simplified
proof of the results in \cite{GoreacMartinez2015} in Subsection
\ref{SubsectionComparison}. We discuss the difference between null and full
approximate controllability in Subsection \ref{SubsectionAppCtrl}, Example
\ref{ExempleAppCtrl} and give a sufficient criterion for the stronger notion
of approximate controllability (Criterion \ref{SuffConditionAppCtrl}). Section
\ref{SectionApplication} focuses on an example derived from
\cite{Krishna29032005} (see also \cite{crudu_debussche_radulescu_09}). The
proofs of the results and the technical constructions allowing to prove
Theorem \ref{ThMain} are gathered in Section \ref{SectionProofs}.

\section{The Control System and Main Results}

We briefly recall the construction of a particular class of pure jump, non
explosive processes on a space $\Omega$ and taking their values in a metric
space $\left(  E,\mathcal{B}\left(  E\right)  \right)  .$ Here, $\mathcal{B}%
\left(  E\right)  $ denotes the Borel $\sigma$-field of $E.$ The elements of
the space $E$ are referred to as modes. These elements can be found in
\cite{davis_93} in the particular case of piecewise deterministic Markov
processes (see also \cite{Bremaud_1981}). To simplify the arguments, we assume
that $E$ is finite and we let $p\geq1$ be its cardinal. The process is
completely described by a couple $\left(  \lambda,Q\right)  ,$ where
$\lambda:E\longrightarrow%
%TCIMACRO{\U{211d} }%
%BeginExpansion
\mathbb{R}
%EndExpansion
_{+}$ and the measure $Q:E\longrightarrow\mathcal{P}\left(  E\right)  $, where
$\mathcal{P}\left(  E\right)  $ stands for the set of probability measures on
$\left(  E,\mathcal{B}\left(  E\right)  \right)  $ such that \ $Q\left(
\gamma,\left\{  \gamma\right\}  \right)  =0.$ Given an initial mode
$\gamma_{0}\in E,$ the first jump time satisfies $\mathbb{P}^{0,\gamma_{0}%
}\left(  T_{1}\geq t\right)  =\exp\left(  -t\lambda\left(  \gamma_{0}\right)
\right)  .$ The process $\Gamma_{t}:=\gamma_{0},$ on $t<T$ $_{1}.$ The
post-jump location $\gamma^{1}$ has $Q\left(  \gamma_{0},\cdot\right)  $ as
conditional distribution. Next, we select the inter-jump time $T_{2}-T_{1}$
such that $\mathbb{P}^{0,\gamma_{0}}\left(  T_{2}-T_{1}\geq t\text{ }/\text{
}T_{1},\gamma^{1}\right)  =\exp\left(  -t\lambda\left(  \gamma^{1}\right)
\right)  $ and set $\Gamma_{t}:=\gamma^{1},$ if $t\in\left[  T_{1}%
,T_{2}\right)  .$ The post-jump location $\gamma^{2}$ satisfies $\mathbb{P}%
^{0,\gamma_{0}}\left(  \gamma^{2}\in A\text{ }/\text{ }T_{2},T_{1},\gamma
^{1}\right)  =Q\left(  \gamma^{1},A\right)  ,$ for all Borel set $A\subset E.$
And so on. To simplify arguments on the equivalent ordinary differential
system, following \cite[Assumption (2.17)]{CFJ_2014}, we will assume that the
system stops after a non-random, fixed number $M>0$ of jumps i.e.
$\mathbb{P}^{0,\gamma_{0}}\left(  T_{M+1}=\infty\right)  =1$. The reader is
invited to note (see Remark \ref{RemarkM}) that, for large $M,$ the criteria
given in the main result (Theorem \ref{ThMain}) no longer depend on $M$ (due
to the finite dimension of the mode and state spaces).

We look at the process $\Gamma$ under $\mathbb{P}^{0,\gamma_{0}}$ and denote
by $\mathbb{F}^{0}$ the filtration $\left(  \mathcal{F}_{\left[  0,t\right]
}:=\sigma\left\{  \Gamma_{r}:r\in\left[  0,t\right]  \right\}  \right)
_{t\geq0}.$ The predictable $\sigma$-algebra will be denoted by $\mathcal{P}%
^{0}$ and the progressive $\sigma$-algebra by $Prog^{0}.$ As usual, we
introduce the random measure $q$ on $\Omega\times\left(  0,\infty\right)
\times E$ by setting $q\left(  \omega,A\right)  =\sum_{k\geq1}1_{\left(
T_{k}\left(  \omega\right)  ,\Gamma_{T_{k}\left(  \omega\right)  }\left(
\omega\right)  \right)  \in A},$ for all $\omega\in\Omega,$ $A\in
\mathcal{B}\left(  0,\infty\right)  \times\mathcal{B}\left(  E\right)  .$ The
compensated martingale measure is denoted by $\widetilde{q}$. (For our readers
familiar with \cite{GoreacMartinez2015}, we emphasize that the notation is
slightly different, the counting measure $q$ corresponds to $p$ in the cited
paper and the martingale measure $\widetilde{q}$ replaces $q$ in the same
reference. Further details on the compensator are given in Subsection
\ref{SubsTechnicalPrelim}.)

We consider a switch system given by a process $(X(t),\Gamma(t))$ on the state
space $%
%TCIMACRO{\U{211d} }%
%BeginExpansion
\mathbb{R}
%EndExpansion
^{N}\times E,$ for some $N\geq1$ and the family of modes $E$. $\ $The control
state space is assumed to be some Euclidian space $%
%TCIMACRO{\U{211d} }%
%BeginExpansion
\mathbb{R}
%EndExpansion
^{d},$ $d\geq1$. The component $X(t)$ follows a controlled differential system
depending on the hidden variable $\gamma$. We will deal with the following
model ($A$ is implicitly assumed to be $0$ after the last jump).%
\begin{equation}
dX_{s}^{x,u}=\left[  A\left(  \Gamma_{s}\right)  X_{s}^{x,u}+Bu_{s}\right]
ds+\int_{E}C\left(  \Gamma_{s-},\theta\right)  X_{s-}^{x,u}\widetilde{q}%
\left(  ds,d\theta\right)  ,\text{ }s\geq0,\text{ }X_{0}^{x,u}=x. \label{SDE0}%
\end{equation}
The operators $A\left(  \gamma\right)  \in%
%TCIMACRO{\U{211d} }%
%BeginExpansion
\mathbb{R}
%EndExpansion
^{N\times N}$ , $B\in%
%TCIMACRO{\U{211d} }%
%BeginExpansion
\mathbb{R}
%EndExpansion
^{N\times d}$ and $C\left(  \gamma,\theta\right)  \in%
%TCIMACRO{\U{211d} }%
%BeginExpansion
\mathbb{R}
%EndExpansion
^{N\times N}$, for all $\gamma,\theta\in E$. For linear operators, we denote
by $\ker$ their kernel and by $\operatorname{Im}$ the image (or range) spaces.
Moreover, the control process $u:\Omega\times%
%TCIMACRO{\U{211d} }%
%BeginExpansion
\mathbb{R}
%EndExpansion
_{+}\longrightarrow%
%TCIMACRO{\U{211d} }%
%BeginExpansion
\mathbb{R}
%EndExpansion
^{d}$ is an $%
%TCIMACRO{\U{211d} }%
%BeginExpansion
\mathbb{R}
%EndExpansion
^{d}$-valued, $\mathbb{F}^{0}-$ progressively measurable, locally square
integrable process. The space of all such processes will be denoted by
$\mathcal{U}_{ad}$ and referred to as the family of admissible control
processes. The explicit structure of such processes can be found in
\cite[Proposition 4.2.1]{Jacobsen}, for instance. Since the control process
does not (directly) intervene in the noise term, the solution of the above
system can be explicitly computed with $\mathcal{U}_{ad}$ processes instead of
the (more usual) predictable processes.

\subsection{The Duality Abstract Characterization of Approximate
Null-Controllability\label{SubsectionDuality}}

We begin with recalling the following approximate controllability concepts.

\begin{definition}
The system (\ref{SDE0}) is said to be approximately controllable in time $T>0$
starting from the initial mode $\gamma_{0}\in E,$ if, for every $\mathcal{F}%
_{\left[  0,T\right]  }$-measurable, square integrable $\xi\in\mathbb{L}%
^{2}\left(  \Omega,\mathcal{F}_{\left[  0,T\right]  },\mathbb{P}^{0,\gamma
_{0}};%
%TCIMACRO{\U{211d} }%
%BeginExpansion
\mathbb{R}
%EndExpansion
^{N}\right)  $, every initial condition $x\in%
%TCIMACRO{\U{211d} }%
%BeginExpansion
\mathbb{R}
%EndExpansion
^{N}$ and every $\varepsilon>0$, there exists some admissible control process
$u\in\mathcal{U}_{ad}$ such that $\mathbb{E}^{0,\gamma_{0}}\left[  \left\vert
X_{T}^{x,u}-\xi\right\vert ^{2}\right]  \leq\varepsilon.$ The system
(\ref{SDE0}) is said to be approximately null-controllable in time $T>0$ if
the previous condition holds for $\xi=0$ ($\mathbb{P}^{0,\gamma_{0}}$-a.s.).
\end{definition}

At this point, let us consider the backward (linear) stochastic differential
equation
\begin{equation}
\left\{
\begin{array}
[c]{l}%
dY_{t}^{T,\xi}=\left[  -A^{\ast}\left(  \Gamma_{t}\right)  Y_{t}^{T,\xi}%
-\int_{E}\left(  C^{\ast}\left(  \Gamma_{t},\theta\right)  +I\right)
Z_{t}^{T,\xi}\left(  \theta\right)  \lambda\left(  \Gamma_{t}\right)  Q\left(
\Gamma_{t},d\theta\right)  \right]  dt+\int_{E}Z_{t}^{T,\xi}\left(
\theta\right)  q\left(  dt,d\theta\right)  ,\\
Y_{T}^{T,\xi}=\xi\in\mathbb{L}^{2}\left(  \Omega,\mathcal{F}_{\left[
0,T\right]  },\mathbb{P}^{0,\gamma_{0}};%
%TCIMACRO{\U{211d} }%
%BeginExpansion
\mathbb{R}
%EndExpansion
^{N}\right)  .
\end{array}
\right.  \label{BSDE0}%
\end{equation}
Classical arguments on the controllability operators and the duality between
the concepts of controllability and observability lead to the following
characterization (cf. \cite[Theorem 1]{GoreacMartinez2015}).

\begin{theorem}
[{\cite[Theorem 1]{GoreacMartinez2015}}]\label{dualityTh}The necessary and
sufficient condition for approximate null-controllability (resp. approximate
controllability) of (\ref{SDE0}) with initial mode $\gamma_{0}\in E$ is that
any solution $\left(  Y_{t}^{T,\xi},Z_{t}^{T,\xi}\left(  \cdot\right)
\right)  $ of the dual system (\ref{BSDE0}) for which $Y_{t}^{T,\xi}\in\ker
B^{\ast}$ $,$ $\mathbb{P}^{0,\gamma_{0}}\mathbb{\otimes}Leb$ almost everywhere
on $\Omega\times\left[  0,T\right]  $ should equally satisfy $Y_{0}^{T,\xi
}=0,$ $\mathbb{P}^{0,\gamma_{0}}-$almost surely (resp. $Y_{t}^{T,\xi}=0,$
$\mathbb{P}^{0,\gamma_{0}}\mathbb{\otimes}Leb-a.s.$).
\end{theorem}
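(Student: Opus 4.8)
The plan is to prove this duality characterization via the standard controllability/observability duality for linear control systems, adapted to the stochastic switch setting.

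**Setting up the duality pairing.** First I would compute the Itô product $d\langle X_s^{x,u}, Y_s^{T,\xi}\rangle$ using the forward SDE (\ref{SDE0}) and the backward SDE (\ref{BSDE0}). The key cancellation to watch for: the drift of $X$ contributes $\langle A(\Gamma_s)X_s, Y_s\rangle$, the drift of $Y$ contributes $\langle X_s, -A^*(\Gamma_s)Y_s - \int_E(C^*+I)Z_s\lambda Q\rangle$, and the bracket between the two martingale integrals (one against $\widetilde{q}$, one against $q$) produces a term that, together with the compensator of the $q$-integral against $CX$, must reconcile with the $\int_E(C^*+I)Z_s\lambda Q$ term. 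The $A$-terms cancel directly. The remaining computation should show that integrating from $0$ to $T$ and taking $\mathbb{E}^{0,\gamma_0}$ yields the fundamental identity
\begin{equation}
\mathbb{E}^{0,\gamma_0}\left[\langle X_T^{x,u},\xi\rangle\right] = \langle x, \mathbb{E}^{0,\gamma_0}[Y_0^{T,\xi}]\rangle + \mathbb{E}^{0,\gamma_0}\left[\int_0^T \langle u_s, B^*Y_s^{T,\xi}\rangle\, ds\right].
\end{equation}
Since $Y_0^{T,\xi}$ is $\mathcal{F}_{[0,0]}$-measurable hence deterministic, the first term is an honest inner product; I would verify this measurability carefully.

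**Deriving approximate null-controllability.** Fix $x$ and set $\xi=0$ as the target. Approximate null-controllability means $0$ lies in the closure of $\{X_T^{x,u} : u\in\mathcal{U}_{ad}\}$ in $\mathbb{L}^2$. By a Hahn–Banach separation argument, this fails exactly when there is a nonzero terminal functional $\eta\in\mathbb{L}^2(\Omega,\mathcal{F}_{[0,T]})$ orthogonal to the attainable set but not to the free-dynamics term. Running the dual BSDE backward from $\eta$ and applying the identity above, orthogonality to all controlled directions forces $B^*Y_s^{T,\eta}=0$ a.e., i.e. $Y_s^{T,\eta}\in\ker B^*$; the obstruction to reaching $0$ is precisely $\langle x, Y_0^{T,\eta}\rangle\neq 0$. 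Thus approximate null-controllability from every $x$ is equivalent to the implication "$Y^{T,\xi}\in\ker B^*$ a.e. $\Rightarrow Y_0^{T,\xi}=0$", which is the claimed criterion. For full approximate controllability, I would instead require density of $\{X_T^{x,u}-\xi\}$ for every target $\xi$; the same pairing shows the annihilator of the attainable set is $\{Y^{T,\xi}: B^*Y_s^{T,\xi}=0 \text{ a.e.}\}$, and density for all targets forces this annihilator to be trivial, i.e. $Y_t^{T,\xi}=0$ for $\mathbb{P}^{0,\gamma_0}\otimes Leb$-a.e. $(t,\omega)$.

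**The main obstacle.** The hard part will be the Itô-level bookkeeping that matches the martingale-measure cross term with the compensator term in the BSDE drift. Concretely, the forward equation integrates against the compensated measure $\widetilde{q}$ while the backward equation integrates against the raw counting measure $q$, so I must expand $\widetilde{q} = q - \widehat{q}$ (with compensator $\widehat{q}(ds,d\theta)=\lambda(\Gamma_s)Q(\Gamma_s,d\theta)\,ds$) and carefully track which pieces survive under $\mathbb{E}^{0,\gamma_0}$. The appearance of the $+I$ inside $(C^*+I)$ in (\ref{BSDE0}) is exactly what absorbs the pure-jump bracket of the two stochastic integrals, and getting the sign and the compensator normalization right is where errors hide. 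Once the fundamental duality identity is established, the controllability $\Leftrightarrow$ observability equivalence is routine functional analysis, so I would invest the bulk of the effort in that single computation; the local square-integrability of admissible controls and the $\mathbb{L}^2$ well-posedness of (\ref{BSDE0}) guarantee all the integrals above are finite and the formal manipulations are justified.
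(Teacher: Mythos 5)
Your proposal is correct and follows essentially the same route the paper relies on: Theorem \ref{dualityTh} is not proved here but imported from \cite{GoreacMartinez2015} as a consequence of ``classical arguments on the controllability operators and the duality between controllability and observability,'' which is precisely your It\^{o}-product identity $\mathbb{E}^{0,\gamma_0}\left[\left\langle X_T^{x,u},\xi\right\rangle\right]=\left\langle x,Y_0^{T,\xi}\right\rangle+\mathbb{E}^{0,\gamma_0}\left[\int_0^T\left\langle u_s,B^{\ast}Y_s^{T,\xi}\right\rangle ds\right]$ followed by the Hahn--Banach separation applied to the linear range space $\{X_T^{0,u}:u\in\mathcal{U}_{ad}\}$. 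Your identification of the $(C^{\ast}+I)$ term as the piece that absorbs the jump covariation between the $\widetilde{q}$- and $q$-integrals is exactly the right bookkeeping, so no gap to report.
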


\begin{remark}
Concerning the operator $A,$ it is assumed to be a switched matrix but it
could also depend on $\left(  t,\Gamma_{t}\right)  $ or on all the times and
marks prior to $t.$ This is why, we implicitly assumed that $A=0$ after the
last jump ($M^{th}$) occurs. Similar assertions are true for $C$ (otherwise,
the backward equation (\ref{BSDE0}) should be written with the compensator
$\widehat{q}$ replacing $\lambda\left(  \Gamma_{t}\right)  Q\left(  \Gamma
_{t},d\theta\right)  $.) The reader may also look at the end of Subsection
\ref{SubsTechnicalPrelim}.
\end{remark}

\subsection{Main Result : An Iterative Invariance Criterion}

Before stating the main result of our paper, we need the following invariance
concepts (cf. \cite{Curtain_86}, \cite{Schmidt_Stern_80}).

\begin{definition}
We consider a linear operator $\mathcal{A\in}%
%TCIMACRO{\U{211d} }%
%BeginExpansion
\mathbb{R}
%EndExpansion
^{N\times N}$ and a family $\mathcal{C=}\left(  \mathcal{C}_{i}\right)
_{1\leq i\leq k}\subset%
%TCIMACRO{\U{211d} }%
%BeginExpansion
\mathbb{R}
%EndExpansion
^{N\times N}$.

(i) A set $V\subset%
%TCIMACRO{\U{211d} }%
%BeginExpansion
\mathbb{R}
%EndExpansion
^{N}$ is said to be $\mathcal{A}$- invariant if $\mathcal{A}V\subset V.$

(ii) A set $V\subset%
%TCIMACRO{\U{211d} }%
%BeginExpansion
\mathbb{R}
%EndExpansion
^{N}$ is said to be $\left(  \mathcal{A};\mathcal{C}\right)  $- invariant if
$\mathcal{A}V\subset V+%
%TCIMACRO{\tsum \limits_{i=1}^{k}}%
%BeginExpansion
{\textstyle\sum\limits_{i=1}^{k}}
%EndExpansion
\operatorname{Im}\mathcal{C}_{i}.$
\end{definition}

We construct a mode-indexed family of linear subspaces of $%
%TCIMACRO{\U{211d} }%
%BeginExpansion
\mathbb{R}
%EndExpansion
^{N}$ denoted by $\left(  V_{\gamma}^{M,n}\right)  _{0\leq n\leq M,\text{
}\gamma\in E}$ by setting
\begin{equation}
\mathcal{A}^{\ast}\left(  \gamma\right)  :=A^{\ast}\left(  \gamma\right)
-\int_{E}\left(  C^{\ast}\left(  \gamma,\theta\right)  +I\right)
\lambda(\gamma)Q(\gamma,d\theta)\text{ and }V_{\gamma}^{M,M}=\ker B^{\ast},
\label{CalA}%
\end{equation}
for all $\gamma\in E,$ and computing, for every $0\leq n\leq M-1,$
\begin{equation}
\left.  V_{\gamma}^{M,n}\text{ the largest }\left(  \mathcal{A}^{\ast}\left(
\gamma\right)  ;\left[  \left(  C^{\ast}(\gamma,\theta)+I\right)
\Pi_{V_{\theta}^{M,n+1}}:\theta\in E,\text{ }Q\left(  \gamma,\theta\right)
>0\right]  \right)  -\text{invariant subspace of }\ker B^{\ast}.\right.
\label{V^n_spaces}%
\end{equation}
Here, $\Pi_{V}$ denotes the orthogonal projection operator onto the linear
space $V\subset%
%TCIMACRO{\U{211d} }%
%BeginExpansion
\mathbb{R}
%EndExpansion
^{N}$. Whenever there is no confusion at risk, having fixed the maximal number
of jumps $M\geq1,$ we drop the dependency on $M$ (i.e. we write $V_{\gamma
}^{n}$ instead of $V_{\gamma}^{M,n}$ for all $0\leq n\leq M$).

\begin{remark}
\label{RemarkM}(i) A simple recurrence argument shows that $V_{\gamma}%
^{M,n}\subset V_{\gamma}^{M,m}$, for every $0\leq n\leq m\leq M$. Furthermore,
$V_{\gamma}^{M,M-n}=V_{\gamma}^{M^{\prime},M^{\prime}-n},$ for all $0\leq
n\leq M\leq M^{\prime}.$ Moreover, since the dimension of $\ker B^{\ast}$
cannot exceed $N,$ $V_{\gamma}^{M,0}=V_{\gamma}^{\min\left(  M,N^{p}\right)
,0}.$

(ii) This spaces do not depend on the choice of the controllability horizon
$T>0.$ Therefore, if the approximate (null-)controllability is described by
these sets, it is independent of the time horizon.
\end{remark}

The main result of the paper is the following.

\begin{theorem}
\label{ThMain}The switch system (\ref{SDE0}) is approximately
null-controllable (in time $T>0$) with $\gamma_{0}$ as initial mode, if and
only if the generated set $V_{\gamma_{0}}^{0}$ reduces to $\left\{  0\right\}
.$
\end{theorem}

The proof is postponed to Section \ref{SectionProofs}. This proof uses the
reduction of backward equations with respect to Marked point processes to a
system of ordinary differential equations given in \cite{CFJ_2014}. In order
to formulate this system (see Proposition \ref{EquivalenceBSDEODE}), we need
to explain some concepts and notations in Subsection \ref{SubsTechnicalPrelim}%
. To prove necessity of the condition, one uses convenient feedback controls
and the equivalence between invariance and the concept of feedback invariance
(see Proposition \ref{PropNec0Ctrl}). Sufficiency (given by Proposition
\ref{PropSuff0Ctrl}) follows from (time-) invariance of convenient linear
subspaces with respect to ordinary differential dynamics.

\subsection{Comparison With \cite{GoreacMartinez2015}%
\label{SubsectionComparison}}

We begin with giving a different (and simpler) proof of (some of) the results
in \cite{GoreacMartinez2015}. Besides the general (abstract) characterization
of approximate and approximate null-controllability, explicit invariance
criteria were given in two specific settings.

(i) In \textbf{the case without multiplicative noise} $C=0,$ one notes that
the subspaces $V_{\gamma}^{n}$ (for $0\leq n<M$) do not depend on $n.$ They
reduce, in fact, to the largest $\mathcal{A}^{\ast}\left(  \gamma\right)
$-invariant subspace of $\ker B^{\ast}.$ Moreover, in this framework,
$\mathcal{A}^{\ast}\left(  \gamma\right)  $-invariance and $A^{\ast}\left(
\gamma\right)  $-invariance coincide and Theorem \ref{ThMain} yields the following.

\begin{criterion}
[{\cite[Criterion 4]{GoreacMartinez2015}}]The system (\ref{SDE0}) is
approximately null-controllable (with initial mode $\gamma_{0}\in E$) if and
only if the largest subspace of $\ker B^{\ast}$ which is $A^{\ast}\left(
\gamma_{0}\right)  $ - invariant is reduced to the trivial subspace $\left\{
0\right\}  $ for all $\gamma_{0}\in E.$
\end{criterion}

(ii) In \textbf{the case of Poisson-driven systems with mode-independent
coefficients} $A$ and $C,$ one works with the mode-independent operator
$\mathcal{A}^{\ast}:=A^{\ast}-\int_{E}\left(  C^{\ast}\left(  \theta\right)
+I\right)  \lambda Q(d\theta)$. The reader familiar with \cite[Criterion
3]{GoreacMartinez2015} will note that the necessary and sufficient criterion
concerns a notion of strict invariance. We get the same condition provided the
system has the possibility to stabilize (the maximal number of jumps $M\geq
N+1$ is allowed to exceed the dimension of the state space). Moreover, without
loss of generality, one assumes that $E$ is the support of $Q.$

\begin{criterion}
[{\cite[Criterion 3]{GoreacMartinez2015}}]Let us assume that $A\in%
%TCIMACRO{\U{211d} }%
%BeginExpansion
\mathbb{R}
%EndExpansion
^{N\times N},$ $B\in%
%TCIMACRO{\U{211d} }%
%BeginExpansion
\mathbb{R}
%EndExpansion
^{N\times d}$ are fixed and $C\left(  \theta\right)  \in%
%TCIMACRO{\U{211d} }%
%BeginExpansion
\mathbb{R}
%EndExpansion
^{N\times N},$ for all $\theta\in E$ and that $\lambda\left(  \gamma\right)
Q\left(  \gamma,d\theta\right)  $ is independent of $\gamma\in E.$ Moreover,
we assume that $M\geq N+1$. Then the associated system is approximately
null-controllable if and only if the largest subspace $V_{0}\subset\ker
B^{\ast}$ which is $\left(  A^{\ast};\left[  C^{\ast}\left(  \theta\right)
\Pi_{V_{0}}:\theta\in E\right]  \right)  $-invariant is reduced to $\left\{
0\right\}  $.
\end{criterion}

\begin{proof}
The reader will note that the $V^{n}$ spaces in (\ref{V^n_spaces}) no longer
depend on $\gamma\in E.$ They are obviously non-decreasing in $n$ (see Remark
\ref{RemarkM}). Since $\ker B^{\ast}\subset\mathbb{R}^{N},$ it follows that,
provided that $M\geq N+1,$ one has $V^{0}=V^{1}$ (indeed, $V^{0}\subset
V^{1}\subset...\subset V^{M}=\ker B^{\ast}$ and, whenever $V^{k}=V^{k+1},$ for
some $0\leq k\leq M-1$, it follows (by definition), that $V^{j}=V^{k},$ for
all $j\leq k.$ On the other hand, the inclusions cannot always be strict if
$M\geq N+1$ by recalling that the dimension of $\ker B^{\ast}$ cannot exceed
$N$). Moreover, this space $V^{0}$ is the largest subspace $V_{0}\subset\ker
B^{\ast}$ which is $\left(  \mathcal{A}^{\ast};\left[  \left(  C^{\ast}\left(
\theta\right)  +I\right)  \Pi_{V_{0}}:\theta\in E\right]  \right)  $-invariant
which is the same as $\left(  A^{\ast};\left[  C^{\ast}\left(  \theta\right)
\Pi_{V_{0}}:\theta\in E\right]  \right)  $-invariant. The proof is complete by
invoking Theorem \ref{ThMain}.
\end{proof}

\subsection{Approximate or Approximate
Null-Controllability\label{SubsectionAppCtrl}}

Using Riccati techniques, one proves (see \cite[Criterion 3]%
{GoreacMartinez2015}) that, for Poisson-driven systems with mode-independent
coefficients, approximate controllability and approximate null-controllability
properties coincide. However, in the case of actual switching systems, the two
notions have no reason to and do not coincide. This is illustrated by the
following example.

\begin{example}
\label{ExempleAppCtrl}We consider the space dimension $N=4,$ the control
dimension $d=2,$ $E=\left\{  0,1\right\}  $, a switching rate $\lambda=1$ and
a transition probability $Q\left(  \gamma,1-\gamma\right)  =1,$ for $\gamma\in
E.$ Moreover, we consider, for $\gamma\in\left\{  0,1\right\}  ,$
\[
B=\left(
\begin{array}
[c]{cc}%
1 & 0\\
0 & 1\\
0 & 0\\
0 & 0
\end{array}
\right)  ,\text{ }A\left(  \gamma\right)  =\left(
\begin{array}
[c]{cccc}%
0 & 0 & 0 & 0\\
0 & 0 & 0 & 0\\
1+\gamma & 0 & 0 & 0\\
0 & 2-\gamma & 0 & 0
\end{array}
\right)  ,\text{ }C\left(  \gamma,1-\gamma\right)  =\left(
\begin{array}
[c]{cccc}%
-1 & 0 & 0 & 0\\
0 & -1 & 0 & 0\\
\gamma & 0 & -1 & 0\\
0 & 1-\gamma & 0 & -1
\end{array}
\right)  .
\]
The reader is invited to note that $\ker B^{\ast}=span\left\{  e^{3}%
,e^{4}\right\}  $ (standard vectors of the basis of $%
%TCIMACRO{\U{211d} }%
%BeginExpansion
\mathbb{R}
%EndExpansion
^{4}$)$.$ Thus, simple computations yield $V_{0}^{1}\subset span\left\{
e^{4}\right\}  ,$ $V_{1}^{1}\subset span\left\{  e^{3}\right\}  .$ Hence,
$V_{0}^{0}=V_{1}^{0}=\left\{  0\right\}  $ and the system is approximately
null-controllable starting from every initial mode (if $M\geq2$). However, if
one considers $\gamma_{0}=0,$ assumes the mode can jump twice $M=2$ and sets
$\xi:=1_{T_{1}\leq T<T_{2}}e^{3}-1_{T_{2}\leq T}e^{3},$ then one easily notes
that $\left(  Y_{t},Z_{t}\right)  :=\left(  1_{T_{1}\leq t\leq T,t<T_{2}}%
e^{3}-1_{T_{2}\leq t\leq T}e^{3},\left(  1_{t\leq T\wedge T_{1}}%
-2\times1_{T_{1}<t\leq T}\right)  e^{3}\right)  $ obey the equation
(\ref{BSDE0}). To this purpose, it suffices to note that $A^{\ast}(\Gamma
_{t})Y_{t}+\left(  C^{\ast}\left(  \Gamma_{t},1-\Gamma_{t}\right)  +I\right)
Z_{t}=0$ on $\left[  0,T\wedge T_{2}\right]  .$ For every $u\in\mathcal{U}%
_{ad},$ It\^{o}'s formula (e.g. \cite[Chapter II, Section 5, Theorem
5.1]{Ikeda_Watanabe_1981}) applied to the inner product $\left\langle
X_{\cdot}^{0,u},Y_{\cdot}\right\rangle $ on $\left[  0,T\right]  $ yields
$\mathbb{E}^{0,0}\left[  \left\langle X_{T}^{0,u},\xi\right\rangle \right]
=\mathbb{E}^{0,\gamma_{0}}\left[  \int_{0}^{T}\left\langle u_{t},B^{\ast}%
Y_{t}\right\rangle dt\right]  =0.$ In particular, this implies that
$\mathbb{E}^{0,0}\left[  \left\vert X_{T}^{0,u}-\xi\right\vert ^{2}\right]
\geq\mathbb{E}^{0,0}\left[  \left\vert \xi\right\vert ^{2}\right]  >0$ and,
thus, the system (\ref{SDE0}) is not approximately controllable (towards $\xi$).
\end{example}

In fact, the reader may note that the null-controllability property strongly
depends on the initial mode (through the computation of $V_{\gamma_{0}}^{0}$
as last step). A sufficient criterion (already available in
\cite{GoreacMartinez2015}) is that the largest subspace of $\ker B^{\ast}$
which is $\left(  \mathcal{A}^{\ast}\left(  \gamma_{0}\right)  ;\left[
\left(  C^{\ast}(\gamma_{0},\theta)+I\right)  \Pi_{\ker B^{\ast}}:\theta\in
E,Q\left(  \gamma_{0},\theta\right)  >0\right]  \right)  -$invariant should be
reduced to $\left\{  0\right\}  .$ It turns out that asking this condition to
hold true for all $\gamma_{0}\in E$ actually implies approximate
controllability. (The proof is postponed to Section \ref{SectionProofs}.)

\begin{condition}
\label{SuffConditionAppCtrl}Let us assume that the largest $\left(
\mathcal{A}^{\ast}\left(  \gamma\right)  ;\left[  \left(  C^{\ast}%
(\gamma,\theta)+I\right)  \Pi_{\ker B^{\ast}}:Q\left(  \gamma,\theta\right)
>0\right]  \right)  $-invariant subspace of $\ker B^{\ast}$ is reduced to
$\left\{  0\right\}  $, for every $\gamma\in E.$ Then, for every $T>0$ and
every $\gamma_{0}\in E$, the system (\ref{SDE0}) is approximately controllable
in time $T>0.$
\end{condition}

\begin{remark}
\label{RemNotNec}The reader is invited to note that the notion of $\left(
\mathcal{A}^{\ast}\left(  \gamma\right)  ;\left[  \left(  C^{\ast}%
(\gamma,\theta)+I\right)  \Pi_{\ker B^{\ast}}:Q\left(  \gamma,\theta\right)
>0\right]  \right)  $ -invariance and that of $\ \left(  A^{\ast}\left(
\gamma\right)  ;\left[  \left(  C^{\ast}(\gamma,\theta)+I\right)  \Pi_{\ker
B^{\ast}}:Q\left(  \gamma,\theta\right)  >0\right]  \right)  $ -invariance
coincide for subspaces of $\ker B^{\ast}$. Second, according to
\cite[Criterion 3]{GoreacMartinez2015}, the notions of approximate and
approximate null-controllability coincide in the context of Poisson-driven
systems with mode-independent coefficients. Then, a careful look at
\cite[Example 4]{GoreacMartinez2015} provides an example of system which is
approximately controllable without satisfying the sufficient condition given before.
\end{remark}

\section{Towards Applications\label{SectionApplication}}

\textbf{A model.} We will explain how the previous method can be applied in
the study of stochastic gene networks. To this purpose, we consider the
following reaction system describing a repressed bacterium operon model
introduced in \cite{Krishna29032005}.%
\begin{align*}
&  D+R\overset{K_{1}}{\rightleftarrows}DR,\text{ }D+RNAP\overset{K_{2}%
}{\rightleftarrows}DRNAP,\text{ }DRNAP\overset{k_{3}}{\rightarrow
}TrRNAP,\text{ }TrRNAP\overset{k_{4}}{\rightarrow}RBS+D+RNAP\\
&  RBS\overset{k_{5}}{\rightarrow}\varnothing,\text{ }RBS+Rib\overset{K_{6}%
}{\rightleftarrows}RibRBS,\text{ }RibRBS\overset{k_{7}}{\rightarrow
}ElRib+RBS,\text{ }ElRib\overset{k_{8}}{\rightarrow}Protein\\
&  Protein\overset{k_{9}}{\rightarrow}FoldedProtein,\text{ }%
Protein\overset{k_{10}}{\rightarrow}\varnothing,\text{ }%
FoldedProtein\overset{k_{11}}{\rightarrow}\varnothing.
\end{align*}

\textbf{Partitioning and simplifying.} The authors of
\cite{crudu_debussche_radulescu_09} propose a partition of "species" according
to which only $ElRib,Protein$ and $FoldedProtein$ are continuous. The
averaging procedures in \cite[Figure 4]{crudu_debussche_radulescu_09} simplify
the model to%
\begin{equation}
\left.
\begin{array}
[c]{l}%
D^{\ast\ast}\overset{K_{3}^{\ast}}{\rightleftarrows}TrRNAP\overset{k_{4}%
^{\ast}}{\rightarrow}RBS^{\ast}\overset{k_{5}^{\ast}}{\rightarrow}%
\varnothing,\text{ }RBS^{\ast}\overset{k_{7}^{\ast}}{\rightarrow
}ElRib+RBS^{\ast},\text{ }\\
ElRib\overset{k_{8}}{\rightarrow}Protein,\text{ }Protein\overset{k_{9}%
}{\rightarrow}FoldedProtein,\text{ }Protein\overset{k_{10}}{\rightarrow
}\varnothing,\text{ }FoldedProtein\overset{k_{11}}{\rightarrow}\varnothing.
\end{array}
\right.  \label{BacteriumOperon}%
\end{equation}
Due to the conservation law of $\left[  D,R,DR,RNAP,DRNAP,TrRNAP\right]  $ one
should have something like $D^{\ast\ast}+TrRNAP\simeq1.$

It is known (\cite[Page 21]{crudu_debussche_radulescu_09}) that "$RBS^{\ast}$
presents infrequent bursts of activity leading to rapid production of $ElRib$"
and "$RBS^{\ast}$ rapidly switches to $0$ by the reaction $RBS^{\ast
}\rightarrow\varnothing"$. To take into account these elements and keep the
conservation law, we proceed as follows :

(1) as $RBS^{\ast}$ switches to $0,$ $D^{\ast\ast}$ will be reset to $1$
(hence, $D^{\ast\ast}+TrRNAP+RBS^{\ast}=1)$;

(2) bursts (given by the reaction having $k_{7}^{\ast}$ as speed) will be
considered as part of the stochastic updating of the continuous species and
will have null-mean (i.e. they will multiply the martingale measure generated
by the mode switching mechanism). In our toy-model, as $RBS^{\ast}$ switches
to $1$, stochastic bursts on $ElRib$ will affect (in multiplicative way) the
synthesis of $Protein$ (i.e. the reaction $ElRib\overset{k_{8}}{\rightarrow
}Protein$)$.$

\textbf{A toy mathematical system.} The first condition leads to a mode space
$E=\left\{  e^{1},e^{2},e^{3}\right\}  $ consisting of the standard vector
basis of $%
%TCIMACRO{\U{211d} }%
%BeginExpansion
\mathbb{R}
%EndExpansion
^{3},$ with a jump intensity $\lambda$ and a transition measure $\left(
Q\left(  e^{i},\left\{  e^{j}\right\}  \right)  =Q_{i,j}\right)  _{1\leq
i,j\leq3}$ given by
\begin{equation}
\lambda\left(  \gamma\right)  =\left\langle \left(
\begin{array}
[c]{c}%
k_{3}^{\ast}\\
k_{-3}^{\ast}+k_{4}^{\ast}\\
k_{5}^{\ast}%
\end{array}
\right)  ,\gamma\right\rangle >0,\text{ for all }\gamma\in E,\text{ }Q=\left(
\begin{array}
[c]{ccc}%
0 & 1 & 0\\
\frac{k_{-3}^{\ast}}{k_{-3}^{\ast}+k_{4}^{\ast}} & 0 & \frac{k_{4}^{\ast}%
}{k_{-3}^{\ast}+k_{4}^{\ast}}\\
1 & 0 & 0
\end{array}
\right)  . \label{ExempleIntensityQ}%
\end{equation}
We are going to assume that the positive reaction speeds $k_{7}^{\ast}%
,k_{8},k_{9}$ and $k_{11}$ depend on the mode $\gamma$ (note that $RBS^{\ast}$
is part of $\gamma$ and intervenes to get $ElRib)$ and, maybe, of external
one-dimensional control parameters (temperature or catalysts). Since all the
reactions concerning continuous components have one reactant, the resulting
ODE will be linear (see \cite[Eq. (28)]{crudu_debussche_radulescu_09}). A
first order model for the control will give $dx_{t}=\left[  A\left(
\Gamma_{t}\right)  x_{t}+Bu_{t}\right]  dt,$ where $A$ is given by
(\ref{ExempleCoeffABC}). Furthermore, in our toy model, let us assume that the
external control focuses on regulation of $ElRib$ (i.e. $B=\left(
\begin{array}
[c]{ccc}%
1 & 0 & 0
\end{array}
\right)  ^{t}=e^{1}$). We add to that the bursts (see item (2) above) to
finally get a (toy-)model of type (\ref{SDE0}) for which, for every
$\gamma,\theta\in E,$
\begin{equation}
\left.  B=e^{1},\text{ }A\left(  \gamma\right)  =\left(
\begin{array}
[c]{ccc}%
-k_{8}\left(  \gamma\right)  & 0 & 0\\
k_{8}\left(  \gamma_{t}\right)  & -k_{9}\left(  \gamma\right)  & 0\\
0 & k_{9}\left(  \gamma_{t}\right)  & -k_{11}\left(  \gamma\right)
\end{array}
\right)  ,\text{ }C\left(  \gamma,\theta\right)  =\left(
\begin{array}
[c]{ccc}%
0 & 0 & 0\\
k_{7}^{\ast}\left(  \gamma\right)  & 0 & 0\\
0 & 0 & 0
\end{array}
\right)  ,\text{ }k_{7}^{\ast}\left(  \gamma\right)  =1_{e^{3}}\left(
\gamma\right)  .\right.  \label{ExempleCoeffABC}%
\end{equation}

\textbf{Approximate null-controllability.} The largest subspace of $\ker
B^{\ast}$ which is $\left(  A^{\ast}\left(  e^{2}\right)  -\lambda\left(
e^{2}\right)  I;\Pi_{\ker B^{\ast}}\right)  $-invariant reduces to
$span\left(  e^{3}\right)  $ and the largest subspace of $\ker B^{\ast}$ which
is $\left(  A^{\ast}\left(  e^{1}\right)  -\lambda\left(  e^{1}\right)
I;\Pi_{span\left(  e^{3}\right)  }\right)  $invariant is $\left\{  0\right\}
$ (recall that $k_{8}$ and $k_{9}$ are reaction speeds and, thus, are strictly
positive and so is $\lambda$)$.$ Due to the structure of the transition
measure $Q$, as soon as $M\geq2$, the system is approximately
null-controllable starting from $e^{1}.$ Nevertheless, the space $\ker
B^{\ast}\ $being $\mathcal{A}^{\ast}\left(  e^{3}\right)  -\left(
k_{8}\left(  e^{3}\right)  -k_{5}^{\ast}\right)  \left(  C^{\ast}\left(
e^{3},e^{1}\right)  +I\right)  $-invariant, constructions similar to Example
\ref{ExempleAppCtrl} show that, provided $e^{3}$ is reachable in $M$ jumps,
the system is not approximately controllable.

\section{Proof of the Results\label{SectionProofs}}

\subsection{Technical Preliminaries\label{SubsTechnicalPrelim}}

Before giving the reduction of our backward stochastic equation to a system of
ODE, we need to introduce some notations making clear the stochastic structure
of several concepts : final data, predictable and c\`{a}dl\`{a}g adapted
processes and compensator of the initial random measure. The notations in this
subsection follow the ordinary differential approach from \cite{CFJ_2014}.
Since we are only interested in what happens on $\left[  0,T\right]  ,$ we
introduce a cemetery state $\left(  \infty,\overline{\gamma}\right)  $ which
will incorporate all the information after $T\wedge T_{M}.$ It is clear that
the conditional law of $T_{n+1}$ given $\left(  T_{n},\Gamma_{T_{n}}\right)  $
is now composed by an exponential part on $\left[  T_{n}\wedge T,T\right]  $
and an atom at $\infty.$ Similarly, the conditional law of $\Gamma_{T_{n+1}}$
given $\left(  T_{n+1},T_{n},\Gamma_{T_{n}}\right)  $ is the Dirac mass at
$\overline{\gamma}$ if $T_{n+1}=\infty$ and given by $Q$ otherwise. Finally,
under the assumption $\mathbb{P}^{0,\gamma_{0}}\left(  T_{M+1}=\infty\right)
=1$, after $T_{M},$ the marked point process is concentrated at the cemetery state.

We set $\overline{E}_{T}:\mathcal{=}\left(  \left[  0,T\right]  \times
E\right)  \cup\left\{  \left(  \infty,\overline{\gamma}\right)  \right\}  $.
For every $n\geq1,$ we let $\overline{E}_{T,n}\subset\left(  \overline{E}%
_{T}\right)  ^{n+1}$ be the set of all marks of type $e=\left(  \left(
t_{0},\gamma_{0}\right)  ,...,\left(  t_{n},\gamma_{n}\right)  \right)  ,$
where%
\begin{equation}%
\begin{array}
[c]{l}%
t_{0}=0,\text{ }\left(  t_{i}\right)  _{0\leq i\leq n}\text{ are
non-decreasing; }t_{i}<t_{i+1},\text{ if }t_{i}\leq T\text{; }\left(
t_{i},\gamma_{i}\right)  =\left(  \infty,\overline{\gamma}\right)  \text{, if
}t_{i}>T,\text{ }\forall0\leq i\leq n-1,\text{ }%
\end{array}
\label{Def_e_Trajectory}%
\end{equation}
and endow it with the family of all Borel sets $\mathcal{B}_{n}$. For these
sequences, the maximal time is denoted by $\left\vert e\right\vert :=t_{n}$.
Moreover, by abuse of notation, we set $\gamma_{\left\vert e\right\vert
}:=\gamma_{n}.$ Whenever $T\geq t>\left\vert e\right\vert ,$ we set
\begin{equation}
e\oplus\left(  t,\gamma\right)  :=\left(  \left(  t_{0},\gamma_{0}\right)
,...,\left(  t_{n},\gamma_{n}\right)  ,\left(  t,\gamma\right)  \right)
\in\overline{E}_{T,n+1}.\label{Def_e_Concatenation}%
\end{equation}
By defining
\begin{equation}
e_{n}:=\left(  \left(  0,\gamma_{0}\right)  ,\left(  T_{1},\Gamma_{T_{1}%
}\right)  ,...,\left(  T_{n},\Gamma_{T_{n}}\right)  \right)
,\label{en_RandomVar}%
\end{equation}
we get an $\overline{E}_{T,n}-$valued random variable, corresponding to our
mode trajectories.

\textbf{The final data} $\xi$ is $\mathcal{F}_{\left[  0,T\right]  }%
-$measurable and, thus, for every $n\geq0,$ there exists a $\mathcal{B}%
_{n}/\mathcal{B}\left(
%TCIMACRO{\U{211d} }%
%BeginExpansion
\mathbb{R}
%EndExpansion
^{N}\right)  -$measurable function $\overline{E}_{T,n}\ni e\mapsto\xi
^{n}\left(  e\right)  \in%
%TCIMACRO{\U{211d} }%
%BeginExpansion
\mathbb{R}
%EndExpansion
^{N}$ such that:%
\begin{equation}
\text{If }\left\vert e\right\vert =\infty,\text{ then }\xi^{n}\left(
e\right)  =0.\text{ Otherwise, on }T_{n}\left(  \omega\right)  \leq
T<T_{n+1}\left(  \omega\right)  ,\text{ }\xi\left(  \omega\right)  =\xi
^{n}\left(  e_{n}\left(  \omega\right)  \right)  .\text{ }%
\label{Def_RandomVar}%
\end{equation}

\textbf{A c\`{a}dl\`{a}g process} $Y$ \textbf{continuous except, maybe, at
switching times} $T_{n}$ is given by the existence of a family of
$\mathcal{B}_{n}\otimes\mathcal{B}\left(  \left[  0,T\right]  \right)
/\mathcal{B}\left(
%TCIMACRO{\U{211d} }%
%BeginExpansion
\mathbb{R}
%EndExpansion
^{N}\right)  $-measurable functions $y^{n}$ such that, for all $e\in
\overline{E}_{T,n},$ $y^{n}\left(  e,\cdot\right)  $ is continuous on $\left[
0,T\right]  $ and constant on $\left[  0,T\wedge\left\vert e\right\vert
\right]  $ and
\begin{equation}
\text{If }\left\vert e\right\vert =\infty,\text{ then }y^{n}\left(
e,\cdot\right)  =0.\text{ Otherwise, on }T_{n}\left(  \omega\right)  \leq
t<T_{n+1}\left(  \omega\right)  ,\text{ }Y_{t}\left(  \omega\right)
=y^{n}\left(  e_{n}\left(  \omega\right)  ,t\right)  ,\text{ }t\leq
T\text{.}\label{Def_Cadlag}%
\end{equation}

Similar, an $%
%TCIMACRO{\U{211d} }%
%BeginExpansion
\mathbb{R}
%EndExpansion
^{N}-$valued $\mathbb{F}$-\textbf{predictable process} $Z$ defined on
$\Omega\times\left[  0,T\right]  \times E$ is given by the existence of a
family of $\mathcal{B}_{n}\otimes\mathcal{B}\left(  \left[  0,T\right]
\right)  \otimes\mathcal{B}\left(  E\right)  /\mathcal{B}\left(
%TCIMACRO{\U{211d} }%
%BeginExpansion
\mathbb{R}
%EndExpansion
^{N}\right)  -$measurable functions $z^{n}$ satisfying%
\begin{equation}
\text{If }\left\vert e\right\vert =\infty,\text{ then }z^{n}\left(
e,\cdot,\cdot\right)  =0.\text{ On }T_{n}\left(  \omega\right)  <t\leq
T_{n+1}\left(  \omega\right)  ,\text{ }Z_{t}\left(  \omega,\gamma\right)
=z^{n}\left(  e_{n}\left(  \omega\right)  ,t,\gamma\right)  ,\text{ for }t\leq
T\text{, }\gamma\in E\text{.}\label{Def_Predictable}%
\end{equation}

To deduce the form of \textbf{the compensator}, one simply writes
$\widehat{q}\left(  \omega,dt,d\gamma\right)  :=%
%TCIMACRO{\dsum \limits_{n\geq0}}%
%BeginExpansion
{\displaystyle\sum\limits_{n\geq0}}
%EndExpansion
\widehat{q}_{e_{n}\left(  \omega\right)  }^{n}\left(  dt,d\gamma\right)
1_{T_{n}\left(  \omega\right)  <t\leq T_{n+1}\left(  \omega\right)  \wedge T}$
such that%
\begin{equation}
\left\{
\begin{array}
[c]{l}%
\text{If }n\geq M,\text{ then }\widehat{q}_{e}^{n}\left(  dt,d\gamma\right)
=\delta_{\overline{\gamma}}\left(  d\gamma\right)  \delta_{\infty}\left(
dt\right)  \smallskip\text{. If }n\leq M-1,\smallskip\\
\widehat{q}_{e}^{n}\left(  dt,d\gamma\right)  :=\lambda(\gamma_{\left\vert
e\right\vert })Q(\gamma_{\left\vert e\right\vert },d\gamma)1_{\left\vert
e\right\vert <\infty,t\in\left[  \left\vert e\right\vert ,T\right]
}Leb\left(  dt\right)  +\delta_{\overline{\gamma}}\left(  d\gamma\right)
\delta_{\infty}\left(  dt\right)  1_{\left(  \left\vert e\right\vert
<\infty,t>T\right)  \cup\left\vert e\right\vert =\infty},\smallskip
\end{array}
\right.  . \label{Def_Compensator}%
\end{equation}

Let us now concentrate on the specific form of the jump contribution $Z$ (to
the BSDE (\ref{BSDE0})). We consider a c\`{a}dl\`{a}g process $Y$ continuous
except, maybe, at switching times $T_{n}$. Then, as explained before, this can
be identified with a family $\left(  y^{n}\right)  .$ We construct, for every
$n\geq0,$%
\begin{equation}
\widehat{y}^{n+1}\left(  e,t,\gamma\right)  :=y^{n+1}\left(  e\oplus\left(
t,\gamma\right)  ,t\right)  1_{\left\vert e\right\vert <t} \label{yhatn+1}%
\end{equation}
and $Y_{T_{n+1}}$ can be obtained by simple integration of the previous
quantity with respect to the conditional law of $\left(  T_{n+1}%
,\Gamma_{T_{n+1}}\right)  $ knowing $\mathcal{F}_{T_{n}}.$ Then, $Z$ is given
by $z^{n}\left(  e,t,\gamma\right)  :=\widehat{y}^{n+1}\left(  e,t,\gamma
\right)  -y^{n}\left(  e,t\right)  .$

The \textbf{coefficient} function\textbf{ }$A\left(  \Gamma_{t}\right)  $ is
adapted and can be seen as follows: if $\left\vert e\right\vert =\infty,$ then
$A=0;$ otherwise, one works with $A\left(  \gamma_{\left\vert e\right\vert
}\right)  .$ Similar constructions hold true for $C.$ In fact, the results of
the present paper can be generalized to more general path-dependence of the coefficients.

\subsection{Reduction to a System of Linear ODEs}

We consider the family of (ordinary) differential equations%
\begin{equation}
\left\{
\begin{array}
[c]{l}%
y^{M}\left(  e_{M}\left(  \omega\right)  ,\cdot\right)  =\xi^{M}\left(
e_{M}\left(  \omega\right)  \right)  \text{. For }n\leq M-1\smallskip,\text{
}y^{n}\left(  e_{n}\left(  \omega\right)  ,T\right)  =\xi^{n}\left(
e_{n}\left(  \omega\right)  \right)  ,\\
dy^{n}\left(  e_{n}\left(  \omega\right)  ,t\right)  =-A^{\ast}\left(
\gamma_{\left\vert e_{n}\left(  \omega\right)  \right\vert }\right)
y^{n}\left(  e_{n}\left(  \omega\right)  ,t\right)  dt\\
\text{ \ \ \ \ \ \ \ \ \ \ \ \ \ \ \ \ \ \ \ \ \ }-\int_{E}\left(  C^{\ast
}\left(  \gamma_{\left\vert e_{n}\left(  \omega\right)  \right\vert }%
,\theta\right)  +I\right)  \left(  \widehat{y}^{n+1}\left(  e_{n}\left(
\omega\right)  ,t,\theta\right)  -y^{n}\left(  e_{n}\left(  \omega\right)
,t\right)  \right)  \widehat{q}_{e_{n}\left(  \omega\right)  }^{n}\left(
dt,d\theta\right)  \\
(=-\mathcal{A}^{\ast}\left(  \gamma_{\left\vert e_{n}\left(  \omega\right)
\right\vert }\right)  y^{n}\left(  e_{n}\left(  \omega\right)  ,t\right)  dt\\
-\sum_{\theta\in E}\lambda(\gamma_{\left\vert e_{n}\left(  \omega\right)
\right\vert })Q(\gamma_{\left\vert e_{n}\left(  \omega\right)  \right\vert
},\theta)\left(  C^{\ast}\left(  \gamma_{\left\vert e_{n}\left(
\omega\right)  \right\vert },\theta\right)  +I\right)  y^{n+1}\left(
e_{n}\left(  \omega\right)  \oplus\left(  t,\theta\right)  ,t\right)  dt),
\end{array}
\right.  \label{SystemODE}%
\end{equation}

where we have used the notation (\ref{CalA}). The following result adapts
\cite[Lemma 7]{CFJ_2014} to our case.

\begin{proposition}
\label{EquivalenceBSDEODE}A c\`{a}dl\`{a}g adapted process $Y$ given by a
family of functions $\left(  y^{n}\right)  $ as in (\ref{Def_Cadlag}) is
solution to (\ref{BSDE0}) if and only if, for $\mathbb{P}$-almost all $\omega$
and all $0\leq n\leq M,$ it satisfies the system (\ref{SystemODE}).
\end{proposition}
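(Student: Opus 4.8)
The plan is to exploit the pathwise structure forced by the finite number $M$ of jumps: on each random inter-jump interval $\left(T_n,T_{n+1}\right)$ the mode $\Gamma$ stays frozen at $\gamma_{\vert e_n\vert}$ and the counting measure $q$ charges no point, so the backward equation (\ref{BSDE0}) degenerates into a deterministic linear ODE governed by the coefficients of that single mode. This is exactly the mechanism behind \cite[Lemma 7]{CFJ_2014}, which I would adapt while tracking the two features proper to our setting: the mode-dependence of $A$ and $C$, and the explicit compensator $\widehat{q}$ of (\ref{Def_Compensator}). Throughout I fix an outcome $\omega$ with jump times $T_1\left(\omega\right)<T_2\left(\omega\right)<\cdots$ and read $Y$ and $Z$ through the families $\left(y^n\right)$ and $\left(z^n\right)$ via (\ref{Def_Cadlag}) and (\ref{Def_Predictable}), the second being determined by the first through $z^n\left(e,t,\gamma\right)=\widehat{y}^{n+1}\left(e,t,\gamma\right)-y^n\left(e,t\right)$ built from (\ref{yhatn+1}).

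For the direct implication I would localise the integral form of (\ref{BSDE0}) to an interval $T_n\leq s\leq t<T_{n+1}$. There $q\left(\left(s,t\right]\times E\right)=0$ and $\Gamma_r\equiv\gamma_{\vert e_n\vert}$, so the stochastic integral vanishes and the equation becomes
$$Y_t-Y_s=-\int_s^t\left[A^{\ast}\left(\gamma_{\vert e_n\vert}\right)Y_r+\int_E\left(C^{\ast}\left(\gamma_{\vert e_n\vert},\theta\right)+I\right)Z_r\left(\theta\right)\lambda\left(\gamma_{\vert e_n\vert}\right)Q\left(\gamma_{\vert e_n\vert},d\theta\right)\right]dr.$$
Inserting $Y_r=y^n\left(e_n,r\right)$ and $Z_r\left(\theta\right)=z^n\left(e_n,r,\theta\right)=\widehat{y}^{n+1}\left(e_n,r,\theta\right)-y^n\left(e_n,r\right)$ and differentiating in $t$ returns exactly the first line of the $n$-th equation in (\ref{SystemODE}); the passage to its second (compact) form is the algebraic rearrangement defining $\mathcal{A}^{\ast}$ in (\ref{CalA}). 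The terminal data follow by evaluating $Y_T=\xi$ on $\left\{T_n\leq T<T_{n+1}\right\}$, which gives $y^n\left(e_n,T\right)=\xi^n\left(e_n\right)$, while for $n=M$ the vanishing of $A$ together with the absence of further jumps forces $y^M\left(e_M,\cdot\right)\equiv\xi^M\left(e_M\right)$.

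For the converse I would reassemble the pieces by telescoping. Writing $F_r:=A^{\ast}\left(\Gamma_r\right)Y_r+\int_E\left(C^{\ast}\left(\Gamma_r,\theta\right)+I\right)Z_r\left(\theta\right)\lambda\left(\Gamma_r\right)Q\left(\Gamma_r,d\theta\right)$ for minus the drift of (\ref{BSDE0}) (so that the $n$-th ODE reads $\dot{y}^n=-F$), its integral over a sub-interval with endpoints $a<b$ inside $\left[T_n,T_{n+1}\wedge T\right]$ yields $\int_a^b F_r\,dr=y^n\left(e_n,a\right)-y^n\left(e_n,b\right)$, whereas the jump integral over $\left(t,T\right]$ evaluates to $\int_t^T\int_E Z_r\left(\theta\right)q\left(dr,d\theta\right)=\sum_j z^{j-1}\left(e_{j-1},T_j,\Gamma_{T_j}\right)=\sum_j\left[y^j\left(e_j,T_j\right)-y^{j-1}\left(e_{j-1},T_j\right)\right]$, the sum running over the jumps $T_j\in\left(t,T\right]$. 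Chaining the interval contributions from $t$ up to $T$ and subtracting the jump contributions, the intermediate values $y^n\left(e_n,T_n\right)$ and $y^n\left(e_n,T_{n+1}\right)$ cancel in pairs, and the terminal identity $y^k\left(e_k,T\right)=\xi$ leaves
$$Y_t=\xi+\int_t^T F_r\,dr-\int_t^T\int_E Z_r\left(\theta\right)q\left(dr,d\theta\right),$$
which is precisely the integral form of (\ref{BSDE0}).

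The two localisations are routine; the step that needs care is the bookkeeping at the jump times. One must check that the predictable representative $z^n$ genuinely records the jump $Y_{T_{n+1}}-Y_{T_{n+1}-}=y^{n+1}\left(e_{n+1},T_{n+1}\right)-y^n\left(e_n,T_{n+1}\right)$ — this is where the indicator $1_{\vert e\vert<t}$ of (\ref{yhatn+1}) and the concatenation $e_n\oplus\left(T_{n+1},\Gamma_{T_{n+1}}\right)=e_{n+1}$ enter — and that the compensated drift (written against $\lambda Q$) and the genuine jumps (written against $q$) reconcile exactly through the telescoping instead of leaving a residual term. The finiteness of $M$ and the cemetery convention of (\ref{Def_Compensator}) guarantee that all sums are finite and that the boundary case $n=M$ closes the backward recursion. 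Finally, measurability of each $y^n$ and $z^n$ in the trajectory variable $e$ and their square-integrability are inherited from $\xi$ along the linear flow, so the reconstructed pair $\left(Y,Z\right)$ indeed lies in the spaces demanded by (\ref{BSDE0}).
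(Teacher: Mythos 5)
Your argument is correct and follows essentially the same route as the paper, whose proof of Proposition \ref{EquivalenceBSDEODE} consists precisely of deferring to \cite[Lemma 7]{CFJ_2014} with the remark that the only new ingredient is the (classical) drift term $-A^{\ast}\left(\gamma_{\left\vert e_{n}\right\vert}\right)y^{n}dt$, together with the observation that an absorbing state ($\lambda\left(\gamma\right)=0$) degenerates the equation to $dy^{n}=-A^{\ast}y^{n}dt$ --- a case your localisation handles automatically since the $\lambda Q$ term then vanishes. Your inter-jump localisation and telescoping reconstruction are exactly the mechanism of that cited lemma, written out in detail.
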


The proof is quasi-identical to the one of \cite[Lemma 7]{CFJ_2014}. The only
difference in our case is the presence of the term $-A^{\ast}\left(
\gamma_{\left\vert e_{n}\left(  \omega\right)  \right\vert }\right)
y^{n}\left(  e_{n}\left(  \omega\right)  ,t\right)  dt$ which is, of course,
classical. The results of \cite[Lemma 7]{CFJ_2014} apply directly if one
assumes that $\lambda(\gamma)$ $>0$ for all $\gamma\in E$ (that is if there
exists no absorbing state). Otherwise, we actually get an ODE of type
$dy^{n}\left(  e_{n}\left(  \omega\right)  ,t\right)  =-A^{\ast}\left(
\gamma_{\left\vert e_{n}\left(  \omega\right)  \right\vert }\right)
y^{n}\left(  e_{n}\left(  \omega\right)  ,t\right)  dt.$

\subsection{An Iterative Invariance-Based Criterion (Proof of Theorem
\ref{ThMain})}

As already hinted in \cite{GoreacMartinez2015}, the (approximate)
controllability properties can be expressed with respect to invariance
conditions. The equivalence between the dual (backward) stochastic equation
(\ref{BSDE0}) and the (backward) ordinary differential system (\ref{SystemODE}%
) yields the following approximate controllability criterion.

\begin{proposition}
\label{PropNec0Ctrl}If the system (\ref{SDE0}) is approximately
null-controllable with $\gamma_{0}$ as initial mode, then the generated set
$V_{\gamma_{0}}^{0}$ reduces to $\left\{  0\right\}  .$
\end{proposition}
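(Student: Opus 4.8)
The plan is to prove the contrapositive. Suppose $V_{\gamma_0}^0 \neq \{0\}$; I will construct a nonzero solution of the dual system (\ref{BSDE0}) that lives in $\ker B^\ast$ along the whole trajectory, which by Theorem \ref{dualityTh} obstructs approximate null-controllability. The crucial tool is Proposition \ref{EquivalenceBSDEODE}, which lets me build such a solution pathwise as a family $(y^n)$ satisfying the backward ODE system (\ref{SystemODE}) rather than wrestling with the stochastic integral directly. So the real content is a deterministic, iterative construction of the $y^n(e_n,\cdot)$ along mode trajectories.

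The construction runs backward in the jump index $n$, exploiting the invariance defining (\ref{V^n_spaces}). First I fix $M$ large enough that $V_{\gamma_0}^{M,0}=V_{\gamma_0}^0$ (Remark \ref{RemarkM}), pick $0\neq v_0 \in V_{\gamma_0}^0$, and set $y^0(e_0,T):=v_0$. The key observation is that the subspace $V_\gamma^{M,n}$ is, by definition, $\left(\mathcal{A}^\ast(\gamma);\bigl[(C^\ast(\gamma,\theta)+I)\Pi_{V_\theta^{M,n+1}}:\theta,\ Q(\gamma,\theta)>0\bigr]\right)$-invariant inside $\ker B^\ast$. I will use this to run the ODE (\ref{SystemODE}) for $y^n$ while keeping its value inside $V_{\gamma_{|e_n|}}^{M,n}\subset\ker B^\ast$ for all $t$, provided I can choose the ``next-layer'' values $y^{n+1}(e_n\oplus(t,\theta),t)$ to lie in $V_\theta^{M,n+1}$. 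This is precisely the feedback-invariance idea: the invariance $\mathcal{A}^\ast(\gamma)V_\gamma^n \subset V_\gamma^n + \sum_\theta \operatorname{Im}\bigl[(C^\ast(\gamma,\theta)+I)\Pi_{V_\theta^{n+1}}\bigr]$ guarantees that the drift can be compensated by feeding in appropriate projected post-jump data, so the solution stays in $V_\gamma^n$. Concretely, for each $\theta$ with $Q(\gamma,\theta)>0$, I select $y^{n+1}(\cdot\oplus(t,\theta),t)\in V_\theta^{n+1}$ realizing the invariance decomposition, then integrate the resulting linear ODE for $y^n$ backward from $t=T$; since the right-hand side maps $V_\gamma^n$ into itself (after the feedback substitution), $y^n(e_n,t)\in V_\gamma^n$ for all $t\in[0,T]$. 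Because $V_\gamma^n\subset\ker B^\ast$, we get $Y_t\in\ker B^\ast$ identically. At the top layer $n=M$, since $V_\gamma^{M,M}=\ker B^\ast$, the terminal data can be taken arbitrarily in $\ker B^\ast$, closing the recursion.

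The main obstacle is not the integration but the bookkeeping of the feedback selection: I must ensure that the post-jump data $y^{n+1}(e_n\oplus(t,\theta),t)$, chosen to satisfy the invariance relation for one parent trajectory $e_n$, are consistently defined as genuine functions on $\overline{E}_{T,n+1}$ and remain measurable and continuous in $t$. A clean way to avoid inconsistency is to choose, once and for all, a fixed linear feedback operator $F_\gamma^n$ on each $V_\gamma^n$ realizing the $(\mathcal{A}^\ast(\gamma);\ldots)$-invariance, so that the post-jump value is a linear image of the pre-jump value; then the whole family $(y^n)$ is generated deterministically and jointly, trajectory by trajectory, with the required regularity automatic from linearity of the ODE. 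The second thing to verify is genuine nontriviality: $v_0\neq 0$ gives $Y_0=y^0(e_0,\cdot)\equiv v_0\neq 0$ on the event of no jump before $T$, which has positive probability, so $Y_0\neq 0$ on a set of positive measure. Together with $Y_t\in\ker B^\ast$ everywhere, this contradicts the null-controllability characterization in Theorem \ref{dualityTh}, completing the contrapositive.
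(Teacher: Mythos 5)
Your proposal is correct and follows essentially the same route as the paper: both reduce the dual BSDE to the ODE system via Proposition \ref{EquivalenceBSDEODE}, invoke the equivalence of invariance and feedback invariance to fix linear feedback operators $F^{n}_{\gamma,\theta}$, and use the resulting closed-loop linear ODEs to propagate a nonzero $v_{0}\in V^{0}_{\gamma_{0}}$ trajectory-by-trajectory inside the nested spaces $V^{n}_{\gamma}\subset\ker B^{\ast}$, contradicting Theorem \ref{dualityTh}. The only cosmetic difference is that the paper prescribes $v_{0}$ at time $0$ and integrates the closed-loop equations forward, reading off the terminal data $\xi^{n}(e_{n})=\phi_{n,e_{n}}(T)$ at the end, whereas you anchor the data at $t=T$; since the closed-loop flow is invertible and preserves each $V^{n}_{\gamma}$, this changes nothing.
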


\begin{proof}
Using classical results on the different notions of invariance (e.g.
\cite[Theorem 3.2]{Schmidt_Stern_80}, see also \cite[Lemma 4.6]{Curtain_86}),
invariance is equivalent to feedback invariance. Thus, one gets the existence
of a family of operators $F_{\gamma,\theta}^{n}\in\mathcal{L}\left(
V_{\gamma}^{n};V_{\theta}^{n+1}\right)  $ such that $\left(  \mathcal{A}%
^{\ast}\left(  \gamma\right)  +\sum_{_{\theta\in E,\text{ }Q\left(
\gamma,\theta\right)  >0}}\left(  C^{\ast}(\gamma,\theta)+I\right)
F_{\gamma,\theta}^{n}\right)  V_{\gamma}^{n}\subset V_{\gamma}^{n},$ for all
$n\geq0.$ We begin with picking (an arbitrary) $v_{0}\in V_{\gamma_{0}}^{0}$
and define $\zeta^{0}\left(  t_{0},\gamma_{0}\right)  =v_{0}.$ We proceed by
setting, for every $n\geq1$ and $e_{n}\in\overline{E}_{T,n}$, $\phi_{n,e_{n}}$
to be the unique solution of the ordinary differential system%
\[
\left\{
\begin{array}
[c]{l}%
d\phi_{n,e_{n}}\left(  t\right)  =-\left(  \mathcal{A}^{\ast}\left(
\gamma_{\left\vert e_{n}\right\vert }\right)  +\sum_{_{\theta\in E\text{,
}Q\left(  \gamma^{n},\theta\right)  >0}}\left(  C^{\ast}(\gamma_{\left\vert
e_{n}\right\vert },\theta)+I\right)  F_{\gamma_{\left\vert e_{n}\right\vert
},\theta}^{n}\right)  \phi_{n,e_{n}}\left(  t\right)  dt,\text{ }\left\vert
e_{n}\right\vert \leq t\leq T\\
\phi_{n,e_{n}}\left(  \left\vert e_{n}\right\vert \right)  =\left\{
\begin{array}
[c]{l}%
\zeta^{n}\left(  e_{n}\right)  ,\text{ if }\left\vert e_{n}\right\vert
<\infty\text{,}\\
0,\text{ otherwise.}%
\end{array}
\right.  \text{ and }\\
\zeta^{n+1}\left(  e_{n}\oplus\left(  t,\theta\right)  \right)  =\left\{
\begin{array}
[c]{l}%
\frac{1}{\lambda(\gamma_{\left\vert e_{n}\right\vert })Q(\gamma_{\left\vert
e_{n}\right\vert },\theta)}F_{\gamma_{\left\vert e_{n}\right\vert },\theta
}^{n}\phi_{n}\left(  e_{n},t\right)  1_{T\geq t>\left\vert e_{n}\right\vert
},\text{ if }\lambda(\gamma_{\left\vert e_{n}\right\vert })Q(\gamma
_{\left\vert e_{n}\right\vert },\theta)>0,\\
0,\text{ otherwise}%
\end{array}
\right.
\end{array}
\right.
\]
One also sets $\phi_{n,e_{n}}\left(  t\right)  =\phi_{n,e_{n}}\left(
\left\vert e_{n}\right\vert \vee t\right)  $ to extend the solution for
$t\in\left[  0,T\right]  $. Then, one easily notes that $\phi_{n,e_{n}}\left(
t\right)  \in\ker B^{\ast},$ for all $1\leq n\leq M$, all $e_{n}\in
\overline{E}_{T,n}$ and all $t\in\left[  0,T\right]  .$ Moreover, a simple
glance at the construction shows that by setting $y^{n}\left(  e_{n},t\right)
:=\phi_{n,e_{n}}\left(  t\right)  $, for $1\leq n\leq M$, all $e_{n}%
\in\overline{E}_{T,n}$ and all $t\in\left[  0,T\right]  ,$ one gets the
solution of (\ref{SystemODE}) with the particular choice of the final data
$\xi$ such that $\xi^{n}\left(  e_{n}\right)  =\phi_{n,e_{n}}\left(  T\right)
.$ Since we have assumed the system (\ref{SDE0}) to be approximately
null-controllable, Theorem \ref{dualityTh} and Proposition
\ref{EquivalenceBSDEODE} yield $v_{0}=0.$ The proof is complete by recalling
that $v_{0}\in V_{\gamma_{0}}^{0}$ is arbitrary.
\end{proof}

\bigskip At this point, the reader may want to note that these considerations
involve one equation at the time. The invariant space obtained is then
employed for the next equation and gives a coherent character to the system.
The basic idea is to provide some kind of local in time invariance of the sets
concerned. In \cite{GoreacMartinez2015}, this is done using Riccati
techniques. But, except for special cases, the solvability of these stochastic
schemes is far from obvious. Due to the ordinary differential structure of the
equivalent system (\ref{SystemODE}), we are able to elude these techniques and
work directly on the deterministic systems.

\begin{proposition}
\label{PropSuff0Ctrl}Conversely, if the generated set $V_{\gamma_{0}}^{0}$
reduces to $\left\{  0\right\}  ,$ then the system (\ref{SDE0}) is
approximately null-controllable with $\gamma_{0}$ as initial mode.
\end{proposition}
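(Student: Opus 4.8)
The plan is to verify the abstract criterion of Theorem \ref{dualityTh} by means of the deterministic reduction of Proposition \ref{EquivalenceBSDEODE}. Concretely, I would take an arbitrary solution $(Y,Z)$ of the dual system (\ref{BSDE0}) whose first component satisfies $Y_t\in\ker B^{\ast}$, $\mathbb{P}^{0,\gamma_0}\otimes Leb$-almost everywhere on $\Omega\times[0,T]$, and aim to show that necessarily $Y_0=0$. By Proposition \ref{EquivalenceBSDEODE}, such a $Y$ is described by a family $(y^n)$ solving the ordinary differential system (\ref{SystemODE}) for $\mathbb{P}$-almost all $\omega$. The first (measure-theoretic) step is to turn the constraint $Y_t\in\ker B^{\ast}$ into a pointwise statement on the $y^n$: since $E$ is finite, the law of the mark sequence $e_n$ charges every admissible trajectory (one whose successive modes respect the support of $Q$), so by Fubini together with the continuity of $t\mapsto y^n(e_n,t)$ one obtains $y^n(e_n,t)\in\ker B^{\ast}$ for every admissible $e_n$ and every $t\in[0,T]$.

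The heart of the argument is then a downward induction on $n$, from $n=M$ to $n=0$, establishing the claim that $y^n(e_n,t)\in V_{\gamma_{|e_n|}}^n$ for every admissible $e_n$ and every $t\in[0,T]$. The base case $n=M$ is immediate: $y^M(e_M,\cdot)=\xi^M(e_M)$ is constant and lies in $\ker B^{\ast}=V_{\gamma_{|e_M|}}^{M,M}$. For the inductive step I fix an admissible $e_n$, write $\gamma:=\gamma_{|e_n|}$, and introduce the finite-dimensional subspace $W:=\operatorname{span}\{y^n(e_n,t):t\in[0,T]\}\subset\ker B^{\ast}$, spanned by the whole trajectory. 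The induction hypothesis applied at level $n+1$ to the concatenated marks $e_n\oplus(t,\theta)$ (whose terminal mode is exactly $\theta$) gives $y^{n+1}(e_n\oplus(t,\theta),t)\in V_\theta^{n+1}$ for each $\theta$ with $Q(\gamma,\theta)>0$.

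The key computation reads the invariance off the ODE (\ref{SystemODE}). Since the curve $t\mapsto y^n(e_n,t)$ takes its values in the fixed subspace $W$, so does its derivative; hence, solving (\ref{SystemODE}) for the $\mathcal{A}^{\ast}(\gamma)$-term,
\[
\mathcal{A}^{\ast}(\gamma)\,y^n(e_n,t)=-\frac{d}{dt}y^n(e_n,t)-\sum_{\theta\in E}\lambda(\gamma)Q(\gamma,\theta)\big(C^{\ast}(\gamma,\theta)+I\big)\,y^{n+1}\big(e_n\oplus(t,\theta),t\big),
\]
where the first term on the right lies in $W$ and, by the induction hypothesis together with the implication $\lambda(\gamma)Q(\gamma,\theta)>0\Rightarrow Q(\gamma,\theta)>0$, the second lies in $\sum_{\theta:Q(\gamma,\theta)>0}(C^{\ast}(\gamma,\theta)+I)V_\theta^{n+1}=\sum_{\theta:Q(\gamma,\theta)>0}\operatorname{Im}\big[(C^{\ast}(\gamma,\theta)+I)\Pi_{V_\theta^{n+1}}\big]$. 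Letting $t$ vary and using linearity of $\mathcal{A}^{\ast}(\gamma)$, this shows that $W$ is $(\mathcal{A}^{\ast}(\gamma);[(C^{\ast}(\gamma,\theta)+I)\Pi_{V_\theta^{n+1}}:Q(\gamma,\theta)>0])$-invariant and contained in $\ker B^{\ast}$; by the maximality defining $V_\gamma^n$ in (\ref{V^n_spaces}), $W\subset V_\gamma^n$, which is precisely the claim at level $n$. When $\lambda(\gamma)=0$ the forcing term vanishes and $\mathcal{A}^{\ast}(\gamma)=A^{\ast}(\gamma)$, so one gets genuine $\mathcal{A}^{\ast}(\gamma)$-invariance of $W$, which a fortiori yields the required invariance.

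Applying the claim at $n=0$, where $e_0=((0,\gamma_0))$ and $\gamma_{|e_0|}=\gamma_0$, gives $Y_0=y^0(e_0,0)\in V_{\gamma_0}^0=\{0\}$, hence $Y_0=0$, and Theorem \ref{dualityTh} then delivers approximate null-controllability. I expect the main obstacle to be the measure-theoretic first step, namely making rigorous that the almost-everywhere constraint on $Y$ forces $y^n(e_n,\cdot)\in\ker B^{\ast}$ on \emph{all} admissible marks (in particular handling possibly absorbing modes with $\lambda(\gamma)=0$, where the reduced equation degenerates as noted after Proposition \ref{EquivalenceBSDEODE}); by contrast, once $W$ is taken to be the span of the entire trajectory, the inductive invariance computation is essentially forced by (\ref{SystemODE}).
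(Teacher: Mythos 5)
Your proof is correct and follows essentially the same route as the paper: reduction to the ODE system (\ref{SystemODE}), downward induction on $n$ showing $y^{n}(e,t)\in V^{n}_{\gamma_{|e|}}$, and then the maximality in (\ref{V^n_spaces}) combined with Theorem \ref{dualityTh}. The only (harmless) difference is in the inductive step, where you check the required invariance directly for the span of the whole trajectory, whereas the paper runs the standard decreasing chain $W^{0}\supset W^{1}\supset\cdots$ and intersects after at most $N$ steps; your variant is slightly more economical and equally valid.
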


\begin{proof}
We begin with a solution of (\ref{BSDE0}) for which $Y$ belongs to $\ker
B^{\ast}.$ We prove by descending recurrence that $y^{n}\left(  e,t\right)
\in V_{\gamma_{\left\vert e\right\vert }}^{n},$ for all $t\in\left[
0,T\right]  $ and all $e\in\overline{E}_{T,n}$ (starting from $\gamma_{0}$),
where we use the structure (\ref{Def_Cadlag})$.$ The assertion is obvious for
$n=M$ since, by notation, $V_{\cdot}^{M}=\ker B^{\ast}.$ We assume it to hold
true for $n+1\leq M$ and prove it for $n\geq0.$ By equation (\ref{SystemODE}),
one has
\[
dy^{n}\left(  e,t\right)  =\left(  -\mathcal{A}^{\ast}\left(  \gamma
_{\left\vert e\right\vert }\right)  y^{n}\left(  e,t\right)  -\sum_{\theta\in
E}\lambda(\gamma_{\left\vert e\right\vert })Q(\gamma_{\left\vert e\right\vert
},\theta)\left(  C^{\ast}\left(  \gamma_{\left\vert e\right\vert }%
,\theta\right)  +I\right)  y^{n+1}\left(  e\oplus\left(  t,\theta\right)
,t\right)  \right)  dt.
\]
We have assumed that $y^{n}\left(  e,t\right)  \in\ker B^{\ast}$ and, thus,
$\left[  I-\Pi_{\ker B^{\ast}}\right]  y^{n}\left(  e,t\right)  =0.$ We infer
that
\[
\mathcal{A}^{\ast}\left(  \gamma_{\left\vert e\right\vert }\right)
y^{n}\left(  e,t\right)  +\sum_{\theta\in E}\lambda(\gamma_{\left\vert
e\right\vert })Q(\gamma_{\left\vert e\right\vert },\theta)\left(  C^{\ast
}\left(  \gamma_{\left\vert e\right\vert },\theta\right)  +I\right)
y^{n+1}\left(  e\oplus\left(  t,\theta\right)  ,t\right)  \in\ker B^{\ast}.
\]
Hence, using the recurrence assumption, $y^{n}\left(  e,t\right)  $ is (for
almost all $t\in\left[  0,T\right]  ),$ an element of the linear space
\[
W^{0}:=\left\{
\begin{array}
[c]{c}%
v\in\ker B^{\ast}:\exists w^{\theta}\in V_{\theta}^{n+1},\text{ for all
}\theta\in E\text{ s.t. }Q\left(  \gamma_{\left\vert e\right\vert }%
,\theta\right)  >0\text{ satisfying}\\
\mathcal{A}^{\ast}\left(  \gamma_{\left\vert e\right\vert }\right)
v+\underset{\theta\in E,\text{ }Q\left(  \gamma_{\left\vert e\right\vert
},\theta\right)  >0}{\sum}\left(  C^{\ast}\left(  \gamma_{\left\vert
e\right\vert },\theta\right)  +I\right)  w^{\theta}\in\ker B^{\ast}%
\end{array}
\right\}  .
\]
By repeating our argument, we prove that $y^{n}\left(  e,t\right)  $ is (for
almost all $t\in\left[  0,T\right]  ),$ an element of the linear space%
\[
W^{m+1}:=\left\{
\begin{array}
[c]{c}%
v\in W^{m}:\exists w^{\theta}\in V_{\theta}^{n+1},\text{ for all }\theta\in
E\text{ s.t. }Q\left(  \gamma_{\left\vert e\right\vert },\theta\right)
>0\text{ satisfying}\\
\mathcal{A}^{\ast}\left(  \gamma_{\left\vert e\right\vert }\right)
v+\underset{\theta\in E,\text{ }Q\left(  \gamma_{\left\vert e\right\vert
},\theta\right)  >0}{\sum}\left(  C^{\ast}\left(  \gamma_{\left\vert
e\right\vert },\theta\right)  +I\right)  w^{\theta}\in W^{m}%
\end{array}
\right\}  ,
\]
for every $m\geq0.$ Then, $W:=\underset{0\leq m\leq N}{\cap}W^{m}$ is an
$\left(  \mathcal{A}^{\ast}\left(  \gamma_{\left\vert e\right\vert }\right)
;\left[  \left(  C^{\ast}\left(  \gamma_{\left\vert e\right\vert }%
,\theta\right)  +I\right)  \Pi_{V_{\theta}^{n+1}}:Q\left(  \gamma_{\left\vert
e\right\vert },\theta\right)  >0\right]  \right)  -$invariant subspace of the
(at most $N$-dimensional) space $\ker B^{\ast}.$ Therefore, we have proven
that $y^{n}\left(  e,t\right)  \in$ $V_{\gamma_{\left\vert e\right\vert }}%
^{n}.$ To complete our argument, one only needs to recall that, by assumption,
$V_{\gamma_{0}}^{0}=\left\{  0\right\}  $ and use Theorem \ref{dualityTh} and
Proposition \ref{EquivalenceBSDEODE}.
\end{proof}

\subsection{Proof of Sufficiency Condition \ref{SuffConditionAppCtrl} for
Approximate Controllability}

\begin{proof}
[Proof of Condition \ref{SuffConditionAppCtrl}]In light of the Theorem
\cite[Theorem 1]{GoreacMartinez2015} and Proposition \ref{EquivalenceBSDEODE},
one only needs to show that the only solution of (\ref{SystemODE}) remaining
in $\ker B^{\ast}$ is constant $0.$ One proceeds as in the Proof of
Proposition \ref{PropSuff0Ctrl} starting with a solution of (\ref{BSDE0}) for
which $Y$ belongs to $\ker B^{\ast}$ and showing that $y^{n}\left(
e,t\right)  \in$ $V_{\gamma_{\left\vert e\right\vert }}^{n}\subset\ker
B^{\ast},$ for all $t\in\left[  0,T\right]  $ (recall that $y^{n}$ is
continuous). One recalls that $V_{\gamma}^{n}$ is $\left(  \mathcal{A}^{\ast
}\left(  \gamma\right)  ;\left[  \left(  C^{\ast}(\gamma,\theta)+I\right)
\Pi_{V_{\theta}^{n+1}}:Q\left(  \gamma,\theta\right)  >0\right]  \right)
$-invariant, for every $\gamma\in E$. Hence, a fortiori, $V_{\gamma
_{\left\vert e\right\vert }}^{n}$ is $\left(  \mathcal{A}^{\ast}\left(
\gamma_{\left\vert e\right\vert }\right)  ;\left[  \left(  C^{\ast}%
(\gamma_{\left\vert e\right\vert },\theta)+I\right)  \Pi_{\ker B^{\ast}%
}:Q\left(  \gamma_{\left\vert e\right\vert },\theta\right)  >0\right]
\right)  $-invariant. Our assumption implies that $V_{\gamma_{\left\vert
e\right\vert }}^{n}=\left\{  0\right\}  $ and approximate controllability follows.
\end{proof}

\bibliographystyle{plain}
\bibliography{bibliografie_17042016}

\end{document}